\documentclass[reqno,12pt]{amsart}

\usepackage[utf8]{inputenc}
\usepackage{microtype}
\usepackage{amsfonts}
\usepackage{amsmath}
\usepackage{graphicx}
\usepackage{hyperref}
\usepackage{color}

% AMS related packages:

\usepackage{amssymb}
\usepackage{enumitem}

\usepackage{tikz}
\usetikzlibrary{topaths}

% Layout
\usepackage{a4wide}

\usepackage{empheq}

% Theorem definitions:

\newtheorem{theorem}{Theorem}
\newtheorem{lemma}[theorem]{Lemma}
\newtheorem{proposition}[theorem]{Proposition}

\newcommand{\N}{\mathbb{N}}

\newcommand{\R}{\mathbb{R}}

\newcommand{\lk}{\operatorname{lk}}

\newcommand{\dlk}{\lk^\downarrow\!}

\newcommand{\groupoid}{\mathcal{F}}
\newcommand{\match}{\mathcal{M}}
\newcommand{\tree}{\mathfrak{t}}
\newcommand{\alttree}{\mathfrak{u}}
\newcommand{\altalttree}{\mathfrak{v}}
\newcommand{\forest}{\mathfrak{f}}
\newcommand{\altforest}{\mathfrak{e}}

\newcommand{\defeq}{\mathbin{\vcentcolon =}}

\DeclareMathOperator{\F}{F}

\DeclareMathOperator{\CAT}{CAT}

\DeclareMathOperator{\core}{core}

\numberwithin{equation}{section}

%
% ====================================================================
%

\begin{document}

\title[A short account of why Thompson's group $F$ is of type $\F_\infty$]{A short account of why Thompson's group $F$\\ is of type $\F_\infty$}

\author[M.~C.~B.~Zaremsky]{Matthew C.~B.~Zaremsky}
\address{Department of Mathematics and Statistics, University at Albany (SUNY), Albany, NY 12222}
\email{mzaremsky@albany.edu}

\begin{abstract}
In 1984 Brown and Geoghegan proved that Thompson's group $F$ is of type $\F_\infty$, making it the first example of an infinite dimensional torsion-free group of type $\F_\infty$. Over the decades a different, shorter proof has emerged, which is more streamlined and generalizable to other groups. It is difficult, however, to isolate this proof in the literature just for $F$ itself, with no complicated generalizations considered and no additional properties proved. The goal of this expository note then is to present the ``modern'' proof that $F$ is of type $\F_\infty$, and nothing else.
\end{abstract}

\maketitle
\thispagestyle{empty}

\subsection*{Introduction and History}

A \emph{classifying space} for a group $G$ is a CW complex $Y$ with $\pi_1(Y)\cong G$ and $\pi_k(Y)=0$ for all $k\ne 1$. If $G$ admits a classifying space with finite $n$-skeleton, we say $G$ is of \emph{type $F_n$}. Equivalently, $G$ is of type $\F_n$ if it admits a free, cocompact, cellular action on an $(n-1)$-connected CW complex. Being of type $\F_1$ is equivalent to being finitely generated, and being of type $\F_2$ is equivalent to being finitely presented. We say $G$ is of \emph{type $F_\infty$} if it is of type $\F_n$ for all $n$. Thompson's group $F$ was the first example of a torsion-free group of type $\F_\infty$ with no finite dimensional classifying space. Indeed, $F$ cannot have a finite dimensional classifying space since it turns out to contain infinite rank free abelian subgroups, and so in some sense is an ``infinite dimensional'' group.

The fastest definition of $F$ is via the infinite presentation
\[
F=\langle x_0,x_1,x_2,\dots\mid x_j x_i=x_i x_{j+1} \text{ for all } i<j\rangle\text{.}
\]
The endomorphism $F\to F$ sending each $x_i$ to $x_{i+1}$ is clearly idempotent up to conjugation, and so encodes a ``free homotopy idempotent''. This is another source of the group's fame, as it turns out $F$ is the universal group encoding a free homotopy idempotent. This was proved by Freyd--Heller \cite{freyd93} (written in the 1970s, published in the 1990s) and, independently, Dydak \cite{dydak77}. The notation ``$F$'' originated from Freyd and Heller's work, standing for ``free homotopy idempotent'' (and later sometimes also standing for ``Freyd--Heller group''). The group now called $F$ actually predated Freyd and Heller's work, as it arose in Richard Thompson's work on logic in the 1960s (see, e.g., \cite{mckenzie73}), and so over the years has come to be universally called ``Thompson's group $F$''.

The original proof that $F$ is of type $\F_\infty$ was given by Brown and Geoghegan in \cite{brown84}, making heavy use of the map $F\to F$ that is idempotent up to conjugation. This proof is very specific to $F$ itself, but Brown subsequently found a new proof in \cite{brown87} that generalized more easily to variations of $F$. This proof approach was then simplified and further generalized over the years by Stein \cite{stein92}, Farley \cite{farley03}, and others, in a variety of applications to families of ``Thompson-like'' groups. There are too many examples of this to list here, but lists of such examples can be found in, e.g., \cite{skipper19,witzel19}.

By now a comparatively short, easy proof that $F$ is of type $\F_\infty$ exists, thanks to all this work over the years, but isolating it in the literature is difficult. Many (but not all) of the most important steps can be found in \cite[Section~9.3]{geoghegan08} or \cite{brown92}. Also, one can sort out the full ``modern'' $\F_\infty$ proof for $F$ from the (long) $\F_\infty$ proof for the braided Thompson groups in \cite{bux16}, but this requires quite a bit of effort.

The purpose of this note then is to present the most modern form of the $\F_\infty$ proof for Thompson's group $F$, and only for $F$, with no other groups considered and no other properties proved. The target audience is people interested in understanding the most basic situation, just for $F$, before venturing into more complicated generalizations.

\subsection*{Acknowledgments} Thanks are due to a number of people for encouraging me to write this up, including Brendan Mallery, David Rosenthal, Rachel Skipper, Rob Spahn, and Marco Varisco. I am also grateful to the anonymous referee for many excellent suggestions, which greatly improved the discussion of the history of $F$. This work is supported by grant \#635763 from the Simons Foundation.

\bigskip

\subsection*{A: Trees and forests}

Throughout this note, a \emph{tree} will mean a finite rooted binary tree. A \emph{forest} is a disjoint union of finitely many trees. The roots and leaves of a tree or forest are always ordered left to right. The \emph{trivial tree} is the tree with $1$ leaf (which is also its root). A \emph{trivial forest} is a forest each of whose trees is trivial. We denote the trivial forest with $n$ roots (and hence $n$ leaves) by $1_n$. If we want to avoid specifying $n$, we will just write $1=1_n$.

A \emph{caret} is a tree with $2$ leaves. Given a forest $\forest$, a \emph{simple expansion} of $\forest$ is a forest obtained by adding one new caret to $\forest$, with the root of the caret identified with a leaf of $\forest$. If it is the $k$th leaf, this is the \emph{$k$th simple expansion} of $\forest$ (see Figure~\ref{fig:expand}). An \emph{expansion} of $\forest$ is recursively defined to be $\forest$ or a simple expansion of an expansion of $\forest$. Note that if $\forest$ and $\forest'$ have the same number of roots then (and only then do) they have a common expansion. For example any two trees have a common expansion.

\medskip

\begin{figure}[h]
\begin{tikzpicture}
\draw (0,0) -- (1,1) -- (2,0);
\draw (1,-1) -- (2,0) -- (3,-1);
\filldraw (0,0) circle (1.5pt)   (1,1) circle (1.5pt)   (2,0) circle (1.5pt)   (1,-1) circle (1.5pt)   (3,-1) circle (1.5pt);
\begin{scope}[xshift=3in]
\draw (0,0) -- (1,1) -- (2,0);
\draw (1,-1) -- (2,0) -- (3,-1);
\draw (0,-2) -- (1,-1) -- (2,-2);
\filldraw (0,0) circle (1.5pt)   (1,1) circle (1.5pt)   (2,0) circle (1.5pt)   (1,-1) circle (1.5pt)   (3,-1) circle (1.5pt)   (0,-2) circle (1.5pt)   (2,-2) circle (1.5pt);
\end{scope}
\end{tikzpicture}
\caption{A tree, and a simple expansion of the tree (namely the $2$nd simple expansion).}
\label{fig:expand}
\end{figure}
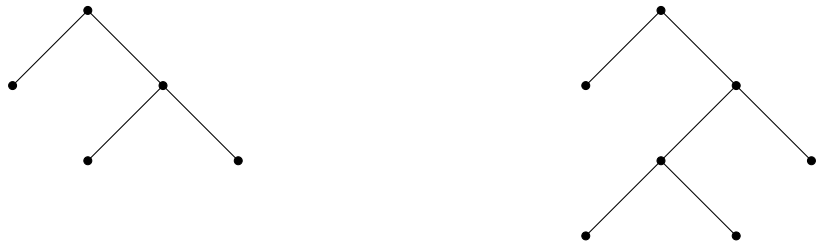

\medskip

\subsection*{B: The group}

A \emph{tree pair} is a pair $(\tree_-,\tree_+)$ where $\tree_\pm$ are trees with the same number of leaves. A \emph{simple expansion} of a tree pair is a tree pair $(\tree_-',\tree_+')$ such that there exists $k$ where $\tree_\pm'$ is the $k$th simple expansion of $\tree_\pm$. An \emph{expansion} of $(\tree_-,\tree_+)$ is recursively defined to be $(\tree_-,\tree_+)$ or a simple expansion of an expansion of $(\tree_-,\tree_+)$. Thompson's group $F$ is the set of equivalence classes $[\tree_-,\tree_+]$ of tree pairs $(\tree_-,\tree_+)$, with the equivalence relation generated by $(\tree_-,\tree_+)\sim (\tree_-',\tree_+')$ whenever $(\tree_-',\tree_+')$ is an expansion of $(\tree_-,\tree_+)$ (for more details on this and some equivalent definitions of $F$, including relating this to the infinite presentation given above, see, e.g., \cite{cannon96,belk04}).

The point of expansions is that one can multiply equivalence classes $[\tree_-,\tree_+]$ and $[\alttree_-,\alttree_+]$ by expanding until without loss of generality $\tree_+=\alttree_-$, and then $[\tree_-,\tree_+][\alttree_-,\alttree_+]\defeq [\tree_-,\alttree_+]$. In this way, $F$ is a group. The identity is $[1_1,1_1]$ and the inverse of an element $[\tree_-,\tree_+]$ is $[\tree_+,\tree_-]$.

\medskip

\subsection*{C: The groupoid}

A \emph{groupoid} is a set with all the axioms of a group except the product $gh$ need not necessarily be defined for every pair of elements $g,h$. A standard example is the set of all square matrices, where two elements can be multiplied if and only if they have the same dimension. Thompson's group $F$ naturally lives in the groupoid where we generalize trees to forests, which we describe now.

A \emph{forest pair} is a pair $(\forest_-,\forest_+)$ where $\forest_\pm$ are forests with the same number of leaves. An \emph{expansion} of a forest pair is defined analogously to an expansion of a tree pair, and we define equivalence of forest pairs similarly to equivalence of tree pairs. Let $\groupoid$ be the set of all equivalence classes $[\forest_-,\forest_+]$ of forest pairs $(\forest_-,\forest_+)$. Since any two forests with the same number of roots have a common expansion, we can multiply two elements $[\forest_-,\forest_+]$ and $[\altforest_-,\altforest_+]$ of $\groupoid$ provided the number of roots of $\forest_+$ and $\altforest_-$ are the same. In this case we expand until $\forest_+=\altforest_-$ and then $[\forest_-,\forest_+][\altforest_-,\altforest_+]\defeq [\forest_-,\altforest_+]$. In this way, $\groupoid$ is a groupoid. Note that the group $F$ is a subgroupoid of $\groupoid$.

\medskip

\subsection*{D: The poset}

Define a \emph{split} to be an element of $\groupoid$ of the form $[\forest,1]$. For $[\forest_-,\forest_+]\in \groupoid$, declare that $[\forest_-,\forest_+]\le [\forest_-,\forest_+][\forest,1]$ for any split $[\forest,1]$ such that this product is defined.

\begin{lemma}\label{lem:poset}
The relation $\le$ is a partial order.
\end{lemma}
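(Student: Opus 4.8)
The plan is to verify the three defining properties of a partial order for $\le$: reflexivity, antisymmetry, and transitivity. Reflexivity is immediate, since $[\forest_-,\forest_+] = [\forest_-,\forest_+][1,1]$ and $[1,1]$ is a split, so $[\forest_-,\forest_+] \le [\forest_-,\forest_+]$. For transitivity, suppose $[\forest_-,\forest_+] \le [\forest_-,\forest_+][\forest,1]$ and $[\forest_-,\forest_+][\forest,1] \le [\forest_-,\forest_+][\forest,1][\altforest,1]$. The key observation is that the product of two splits is again a split: $[\forest,1][\altforest,1]$, once we expand so the bottom of the first matches the top of the second, is of the form $[\altalttree,1]$ where $\altalttree$ is obtained from $\forest$ by grafting (an expansion of) $\altforest$ onto its leaves. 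Hence $[\forest_-,\forest_+] \le [\forest_-,\forest_+][\forest,1][\altforest,1]$ follows directly, with the composite split witnessing the relation. I would state and prove this closure-under-composition fact as the first step, since it is reused.

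The substantive step is antisymmetry. Suppose $[\forest_-,\forest_+] \le [\altforest_-,\altforest_+]$ and $[\altforest_-,\altforest_+] \le [\forest_-,\forest_+]$. Then there are splits $[\forest,1]$ and $[\altforest,1]$ with $[\altforest_-,\altforest_+] = [\forest_-,\forest_+][\forest,1]$ and $[\forest_-,\forest_+] = [\altforest_-,\altforest_+][\altforest,1] = [\forest_-,\forest_+][\forest,1][\altforest,1]$. Cancelling $[\forest_-,\forest_+]$ (legitimate in a groupoid) gives $[\forest,1][\altforest,1] = [1,1]$. Now I want to conclude that both splits are trivial. The cleanest way is a ``leaf-counting'' argument: passing to a common representative, a split $[\forest,1]$ with $\forest$ having $n$ roots has some number $\ell \ge n$ of leaves, with equality exactly when $\forest$ is trivial; and composing splits adds carets, so the leaf count of $[\forest,1][\altforest,1]$ is at least that of each factor, and the identity $[1,1]$ has leaf count equal to its root count. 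This forces $\forest$ (and $\altforest$) to be trivial, so $[\forest,1] = [1,1]$ and the two elements are equal.

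The main obstacle I anticipate is making the leaf-counting argument rigorous in the face of the equivalence relation on forest pairs — one has to check that ``$\forest$ is trivial'' is detectable from the equivalence class $[\forest,1]$, i.e., that no expansion can turn a nontrivial forest paired with a trivial one into a trivial-over-trivial pair. This is true because in any representative $(\forest_-', \forest_+')$ of $[\forest,1]$, the total number of carets in $\forest_-'$ exceeds that in $\forest_+'$ by exactly the (expansion-invariant) quantity (number of leaves) $-$ (number of roots) of $\forest$, which is positive iff $\forest$ is nontrivial; expansions add one caret to each side and so preserve this difference. Once this invariant is in hand, both transitivity's ``composition of splits is a split'' and antisymmetry's ``the only invertible split is trivial'' fall out cleanly, and I would organize the write-up around first establishing that invariant, then handling the three axioms in the order reflexivity, transitivity, antisymmetry.
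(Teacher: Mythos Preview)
Your proposal is correct and follows essentially the same route as the paper: reflexivity via the trivial split, transitivity via closure of splits under composition, and antisymmetry via the observation that a non-trivial split cannot compose with another split to give the identity. The paper handles antisymmetry in one line by noting that any expansion of a non-trivial forest is non-trivial (so $[\forest,1][\altforest,1]=[\forest',1]$ with $\forest'$ non-trivial whenever $\forest$ is), which is exactly what your caret-count invariant establishes; your extra care about the equivalence relation is sound but turns out to be unnecessary once you phrase things this way.
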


\begin{proof}
Clearly $\le$ is reflexive, since any $[1_n,1_n]$ is a split. A product of splits is itself a split, because any forest with $n$ roots is an expansion of $1_n$, so $\le$ is transitive. Finally, a product of non-trivial splits is non-trivial since any expansion of a non-trivial forest is non-trivial, so $\le$ is antisymmetric.
\end{proof}

Let $\groupoid_1$ be the subset of $\groupoid$ consisting of all $[\tree,\forest]$ for $\tree$ a tree (and $\forest$ a forest with the same number of leaves as $\tree$). The groupoid product on $\groupoid$ restricts to a left action of $F$ on $\groupoid_1$. It is clear that $\le$ restricts to $\groupoid_1$, and that this partial order on $\groupoid_1$ is $F$-invariant, since left multiplication by an element of $F$ commutes with right multiplication by a split. In this way, $\groupoid_1$ is an $F$-poset.

The \emph{geometric realization} $|P|$ of a poset $P$ is the simplicial complex with a simplex for every chain $x_0<\cdots<x_k$ of elements $x_i\in P$, with face relation given by taking subchains. A poset is \emph{directed} if any two elements have a common upper bound. It is a standard fact that the geometric realization of a directed poset is contractible.

\begin{lemma}
The poset $\groupoid_1$ is directed, and so the geometric realization $|\groupoid_1|$ is contractible.
\end{lemma}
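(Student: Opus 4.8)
The plan is to establish directedness of $\groupoid_1$ by hand; contractibility of $|\groupoid_1|$ is then immediate from the standard fact quoted just above, that the geometric realization of a directed poset is contractible. So I fix two elements $[\tree,\forest]$ and $[\alttree,\altforest]$ of $\groupoid_1$, where $\tree$ has $n$ leaves and $\alttree$ has $n'$ leaves, and aim to produce a common upper bound.

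First I would observe that every element of $\groupoid_1$ lies below one with trivial second coordinate: $[\tree,\forest]\le[\tree,1_n]$. Indeed $[\forest,1_n]$ is a split, since the forest $\forest$ has $n$ leaves and hence $1_n$ is the trivial forest with as many leaves as $\forest$; and the product $[\tree,\forest][\forest,1_n]$ is defined and equals $[\tree,1_n]$ directly from the definition of the groupoid product, with no expansion needed since the relevant forests already coincide. Hence $[\tree,\forest]\le[\tree,1_n]$, and likewise $[\alttree,\altforest]\le[\alttree,1_{n'}]$.

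Next I would invoke the fact from Section~A that any two trees have a common expansion: let $\tree^*$ be a common expansion of $\tree$ and $\alttree$, with, say, $q$ leaves. Since $\tree^*$ is an expansion of $\tree$, it is obtained by attaching to each leaf of $\tree$ some (possibly trivial) tree; these attached trees form a forest $\forest'$ with $n$ roots and $q$ leaves, and unwinding the definition of expansion of a forest pair shows that $(\tree^*,\forest')$ is an expansion of $(\tree,1_n)$. Thus $[\forest',1_q]$ is a split, the product $[\tree,1_n][\forest',1_q]$ is defined, and it equals $[\tree^*,1_q]$; so $[\tree,1_n]\le[\tree^*,1_q]$. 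Running the same argument with $\alttree$ in place of $\tree$ yields $[\alttree,1_{n'}]\le[\tree^*,1_q]$. Combining these with transitivity of $\le$ (Lemma~\ref{lem:poset}) gives $[\tree,\forest]\le[\tree^*,1_q]$ and $[\alttree,\altforest]\le[\tree^*,1_q]$, so $[\tree^*,1_q]$ is the desired common upper bound and $\groupoid_1$ is directed.

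I do not expect any real obstacle here: the only conceptual input is that two trees have a common expansion, and the sole thing requiring care is correctly unwinding the groupoid multiplication — verifying that right-multiplying $[\tree,\forest]$ by the split $[\forest,1_n]$ erases the second coordinate, and that right-multiplying $[\tree,1_n]$ by the split realizing an expansion of $\tree$ moves up to that expansion. Both become routine once one keeps track of which common expansions are used in forming the products.
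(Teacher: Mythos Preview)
Your proof is correct and follows exactly the same two-step strategy as the paper: first reduce any element $[\tree,\forest]$ to one of the form $[\tree,1]$ via the split $[\forest,1]$, then use a common expansion of the two trees to obtain a common upper bound. Your version is simply more detailed in unwinding the groupoid multiplication and verifying that the relevant products realize the claimed inequalities.
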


\begin{proof}
Note that $[\tree,\forest][\forest,1]=[\tree,1]$, so any element of $\groupoid_1$ has an upper bound of the form $[\tree,1]$ for $\tree$ a tree. Given two such elements $[\tree,1]$ and $[\alttree,1]$, let $\altalttree$ be a common expansion of $\tree$ and $\alttree$, and now $[\altalttree,1]$ is a common upper bound of $[\tree,1]$ and $[\alttree,1]$.
\end{proof}

Since the action of $F$ on $\groupoid_1$ is order preserving, it induces a simplicial action of $F$ on the contractible complex $|\groupoid_1|$.

\begin{lemma}\label{lem:free}
The action of $F$ on $|\groupoid_1|$ is free.
\end{lemma}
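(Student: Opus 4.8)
The plan is to show that no nontrivial element of $F$ fixes a simplex of $|\groupoid_1|$, which by the standard argument (a group element fixing a simplex setwise and acting simplicially permutes its vertices, and for a poset realization the permutation must fix each vertex since the order is preserved) reduces to showing that the action of $F$ on the vertex set $\groupoid_1$ is free. So I would take an element $g = [\tree_-,\tree_+] \in F$ and a vertex $v = [\tree,\forest] \in \groupoid_1$ with $g \cdot v = v$, and aim to conclude $g = [1_1,1_1]$.

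First I would compute the product $g \cdot v$ explicitly using the groupoid multiplication: expand $\tree_+$ and $\tree$ to a common forest (here both are trees, so take a common expansion $\altalttree$ of $\tree_+$ and $\tree$, giving $\tree_+ \rightsquigarrow \altalttree$ via a forest $\forest_1$ and $\tree \rightsquigarrow \altalttree$ via a forest $\forest_2$), so that $g = [\tree_-', \altalttree]$ and $v = [\altalttree, \forest']$ for appropriate expansions, and then $g \cdot v = [\tree_-', \forest']$. The key observation is that this product is obtained from $v$ by left-multiplication, and the hypothesis $g \cdot v = v$ forces the ``denominator'' (the tree $\tree$ part, once everything is brought to lowest terms) of $g$ to coincide with a corresponding piece, ultimately pinning down $\tree_- = \tree_+$.

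The cleanest way to package this: note $v = [\tree,\forest]$ can be written as $[\tree,1]\cdot[1,\forest]$ is not quite groupoid-legal, but $[\tree,\forest] = [\tree, 1_n]^{?}$ — instead I would use that any element of $\groupoid_1$ has an upper bound $[\alttree,1]$, and it suffices to prove freeness on these ``top'' vertices $[\alttree,1]$ together with the general case; but actually the direct route is best. Writing $g\cdot[\tree,\forest] = [\tree,\forest]$ and using that $F$ acts by left multiplication, I would expand so that $\tree$ itself serves as a common expansion domain: pick a common expansion $\altalttree$ of $\tree_+$ and $\tree$. Then $g = [\tree_-'', \altalttree]$ where $\tree_-''$ is the corresponding expansion of $\tree_-$, and $v = [\altalttree, \forest'']$, so $g \cdot v = [\tree_-'', \forest'']$. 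Equality with $v = [\tree, \forest] = [\altalttree, \forest'']$ in $\groupoid$ means the forest pairs $(\tree_-'', \forest'')$ and $(\altalttree, \forest'')$ have a common expansion; since both have the same right-hand forest $\forest''$, and expansions act the same way on both coordinates, this forces $\tree_-''$ and $\altalttree$ to have a common expansion with matching leaf-counts at every stage, which (because $\tree_-''$ and $\altalttree$ are trees) forces $\tree_-'' = \altalttree$. Hence $g = [\altalttree,\altalttree] = [1_1,1_1]$, the identity.

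The main obstacle — really the only nontrivial point — is justifying the final cancellation: that $[\forest_1, \forest''] = [\forest_2, \forest'']$ in $\groupoid$ implies $\forest_1 = \forest_2$ as forests (not merely up to expansion). This is the statement that equivalence of forest pairs sharing a common coordinate is controlled purely by that shared coordinate, and it follows from the analogue of the fact that tree/forest pairs have \emph{reduced} representatives: one reduces by repeatedly cancelling common carets from $(\forest_-,\forest_+)$, and two pairs are equivalent iff they have the same reduced form. Since $\forest''$ is a fixed expansion target, if we never need to cancel on the right then we never cancel on the left either, giving $\forest_1 = \forest_2$. I would state this cancellation as a one-line consequence of the definition of the equivalence relation (common expansion) together with the observation that a leaf of a forest cannot be simultaneously the root of a caret and not, so carets added to reach a common expansion of $(\forest_1,\forest'')$ and $(\forest_2,\forest'')$ are forced to match on the right coordinate and hence on the left. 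This keeps the proof to just a few lines.
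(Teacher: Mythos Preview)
Your reduction from simplices to vertices via order-preservation is exactly what the paper does. For freeness on vertices, though, you work much harder than necessary: the paper simply observes that $F$ acts on $\groupoid_1$ by left translation inside the groupoid $\groupoid$, and left translation in any groupoid is free because every element is invertible --- if $gv=v$ then $g=(gv)v^{-1}=vv^{-1}=[1_1,1_1]$. Your explicit computation with representatives and the cancellation claim $[\forest_1,\forest'']=[\forest_2,\forest'']\Rightarrow\forest_1=\forest_2$ is essentially a by-hand rederivation of right-cancellation in $\groupoid$; it can be made rigorous (via unique reduced representatives, or via left/right cancellation for forest stacking along the lines you sketch), but the groupoid inverse delivers it in one stroke. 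As written, your justification of the cancellation step (``carets added to reach a common expansion are forced to match on the right coordinate and hence on the left'') is a bit informal and would need either an explicit appeal to uniqueness of reduced forest pairs or, more simply, replacement by the one-line inverse argument.
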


\begin{proof}
The action of $F$ on $|\groupoid_1|^{(0)}=\groupoid_1$ is free, since it is an action of a subgroup of a groupoid on the groupoid by left translation. Since the action of $F$ on $\groupoid_1$ is order preserving, the stabilizer of the simplex $x_0<\cdots<x_k$ lies in the stabilizer of $x_0$, hence is trivial.
\end{proof}

\medskip

\subsection*{E: The Stein complex}

In \cite{brown87} Brown used the action of $F$ on $|\groupoid_1|$ to give a new proof that $F$ is of type $\F_\infty$, which generalized to many additional groups. The topological analysis in \cite{brown87} was still quite complicated though. The complex $|\groupoid_1|$ deformation retracts to a smaller, more manageable subcomplex $X$ now called the \emph{Stein complex}. This complex was first constructed by Stein in \cite{stein92} (also see \cite{brown92}), and simplified the $\F_\infty$ proof for $F$ in \cite{brown87} quite a bit.

To define $X$ we need the notion of ``elementary'' forests, splits, and simplices. First, call a forest $\forest$ \emph{elementary} if every tree in $\forest$ is either trivial or a single caret (see Figure~\ref{fig:elem}). Call a split $[\forest,1]$ \emph{elementary} if $\forest$ is an elementary forest. If $x\in \groupoid_1$ and $s$ is a split, so $x\le xs$, then write $x\preceq xs$ if $s$ is an elementary split. (Note that $\preceq$ is reflexive and antisymmetric, but not transitive.) Call a simplex $x_0<\cdots<x_k$ in $|\groupoid_1|$ \emph{elementary} if $x_i\preceq x_j$ for all $i<j$. The elementary simplices form a subcomplex $X$, called the \emph{Stein complex}. Note that $X$ is invariant under the action of $F$.

\medskip

\begin{figure}[h]
\begin{tikzpicture}
\draw (0,0) -- (1,1) -- (2,0);
\filldraw (0,0) circle (1.5pt)   (1,1) circle (1.5pt)   (2,0) circle (1.5pt)   (-1,1) circle (1.5pt);
\begin{scope}[xshift=3in]
\draw (0,0) -- (1,1) -- (2,0);
\draw (1,-1) -- (2,0) -- (3,-1);
\filldraw (0,0) circle (1.5pt)   (1,1) circle (1.5pt)   (2,0) circle (1.5pt)   (-1,1) circle (1.5pt)   (1,-1) circle (1.5pt)   (3,-1) circle (1.5pt);
\end{scope}
\end{tikzpicture}
\caption{An example of an elementary forest and a non-elementary forest.}
\label{fig:elem}
\end{figure}
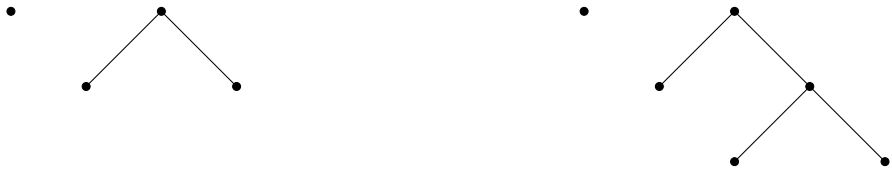

\begin{proposition}\label{prop:stein_cible}
The Stein complex $X$ is homotopy equivalent to $|\groupoid_1|$, hence is contractible.
\end{proposition}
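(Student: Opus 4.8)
The plan is to prove this by a Morse-theoretic argument: I will use the number of leaves as a height function on the vertex set $\groupoid_1$, and compare, level by level, how $X$ and $|\groupoid_1|$ are each assembled from their sublevel subcomplexes. For a vertex $v=[\tree,\forest]$ of $|\groupoid_1|$, write $\ell(v)$ for the number of leaves of $\tree$ (equivalently of $\forest$). Right-multiplying by a non-trivial split strictly increases the number of leaves, so $v<w$ implies $\ell(v)<\ell(w)$; since every simplex of $|\groupoid_1|$ is a chain, $\ell$ is injective on the vertex set of each simplex, every simplex has a well-defined top vertex, and the unique vertex with $\ell=1$ is the identity $[1_1,1_1]$. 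Let $|\groupoid_1|^{\le t}$ and $X^{\le t}$ be the subcomplexes spanned by the vertices $v$ with $\ell(v)\le t$. These filter $|\groupoid_1|$ and $X$ respectively, they coincide (with a single point) when $t=1$, and I will show by induction on $t$ that each inclusion $X^{\le t}\hookrightarrow|\groupoid_1|^{\le t}$ is a homotopy equivalence; passing to the union over $t$ then yields the proposition.

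First I would record the reduction that makes everything work: a simplex $x_0<\cdots<x_k$ of $|\groupoid_1|$ is elementary if and only if $x_0\preceq x_k$. Indeed a ``sub-forest'' of an elementary forest is again elementary, so if the forest expanding $x_0$ to $x_k$ has depth at most $1$, then so does the forest expanding $x_i$ to $x_j$ for every $i<j$. With this in hand, a standard check identifies $|\groupoid_1|^{\le t}$ as the result of attaching to $|\groupoid_1|^{<t}\defeq|\groupoid_1|^{\le t-1}$, for each vertex $v$ with $\ell(v)=t$, the cone $v*\lk^\downarrow(v)$ along its base, where the descending link $\lk^\downarrow(v)$ is the full subcomplex on $\{w:w<v\}$; and likewise identifies $X^{\le t}$ as the result of attaching, for each such $v$, the cone $v*\lk^\downarrow_X(v)$ along $\lk^\downarrow_X(v)$, the full subcomplex of $X$ on $\{w:w<v,\ w\preceq v\}$ (by the reduction above, every chain on such a vertex set is elementary, and stays elementary when $v$ is appended). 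Each presents the $t$-th level as a pushout along cofibrations, and $X^{\le t}\hookrightarrow|\groupoid_1|^{\le t}$ is a morphism between the two pushout squares.

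The crux is then to show that the inclusion $\lk^\downarrow_X(v)\hookrightarrow\lk^\downarrow(v)$ is a homotopy equivalence for every $v$. Given this, the gluing lemma closes the induction: the legs $X^{<t}\hookrightarrow|\groupoid_1|^{<t}$ are equivalences by the inductive hypothesis, the legs $v*\lk^\downarrow_X(v)\hookrightarrow v*\lk^\downarrow(v)$ are equivalences since both sides are contractible cones, so the induced map of pushouts $X^{\le t}\hookrightarrow|\groupoid_1|^{\le t}$ is an equivalence. To compare the descending links, I would show that each of them is either empty or the geometric realization of a poset with a least element --- hence a cone, hence contractible --- and that the two are empty for exactly the same vertices $v$. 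Here $\{w:w<v\}$ is the poset of proper reductions of $v$, whose least element is the reduced representative of the groupoid element $v$ and which is empty precisely when $v$ is already reduced; and $\{w:w<v,\ w\preceq v\}$ is the poset of proper \emph{elementary} reductions of $v$, which is the poset of nonempty subsets of the set of common bottom carets of $v=[\tree,\forest]$ (the leaf-pairs that are sibling pairs in both $\tree$ and $\forest$), ordered so that its least element is obtained by cancelling all of them simultaneously, and which is empty precisely when there are no common bottom carets --- that is, again precisely when $v$ is reduced. So both descending links are empty when $v$ is reduced and contractible otherwise, and the inclusion between them is a homotopy equivalence in either case.

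The step I expect to be the main obstacle is the elementary but delicate bookkeeping about reductions of forest pairs underlying the previous paragraph: one needs that the operation of cancelling common bottom carets is confluent, so that every $v\in\groupoid_1$ has a well-defined reduced representative, and that this representative lies below every other representative in the expansion order (so that it genuinely is the least element of $\{w:w<v\}$); and one must check with some care that the ``cancel all common bottom carets at once'' element really is $\preceq v$ and really is $\le$-below every proper elementary reduction of $v$, since $\preceq$ is not transitive and so cannot be chained together carelessly. The remaining ingredients --- the Morse-theoretic scaffolding presenting each level as a pushout, and the invocation of the gluing lemma --- are routine.
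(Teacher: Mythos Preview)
Your argument rests on a misreading of the poset $\groupoid_1$. A first symptom: your $\ell(v)=$ ``number of leaves of $\tree$'' is not well defined on the equivalence class $[\tree,\forest]$, since expansions change the leaf count; the invariant quantity is the paper's $f(v)$, the number of \emph{roots} of $\forest$. More seriously, the relation $w<v$ in $\groupoid_1$ means $v=w[\altforest,1]$ for some non-trivial split --- it is \emph{not} the relation ``$(\tree_w,\forest_w)$ is a reduction of the representative $(\tree,\forest)$'' --- and your central claim that $\{w:w<v\}$ has a least element (``the reduced representative of $v$'') is simply false. Take $v=[\tree,1_3]$ with $\tree$ the $3$-leaf tree whose left subtree is a caret, and write $\tree'$ for the other $3$-leaf tree. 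The four elements strictly below $v$ are $w_1=[\text{caret},1_2]$, $w_2=[\tree,(\cdot,\text{caret})]$, the identity $[1_1,1_1]$, and $g=[\tree,\tree']\in F$. One checks that the identity is below $w_1$ but not $w_2$, while $g$ is below $w_2$ but not $w_1$, so $|\{w:w<v\}|$ is two disjoint edges --- disconnected, with no least element. Your description of the elementary descending link fails in the same way: $\{w:w\prec v\}$ is the poset of nonempty matchings of the linear graph $L_{f(v)}$ under \emph{reverse} inclusion, which has no least element once $f(v)\ge 3$ (for $f(v)=3$ it is two incomparable points). The phrases ``reduced representative of $v$'' and ``common bottom carets of $v=[\tree,\forest]$'' describe features of a chosen \emph{representative}, not of the class $v$, and that conflation is the source of the error. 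Note also that $\{w:w<v\}$ is nonempty whenever $f(v)>1$, not merely when ``$v$ is unreduced''.

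The paper's proof sidesteps the full descending links entirely. It builds up from $X$ to $|\groupoid_1|$ by gluing in the non-elementary closed intervals $|[x,z]|$ in order of increasing $f(z)-f(x)$; each attaches along the suspension of $|(x,z)|$, and for $x\not\preceq z$ the open interval $(x,z)$ is shown to be contractible via the \emph{elementary core} retraction $xs\mapsto x\,\core(s)$. That core map is the missing idea; the ``least element'' shortcut you propose does not exist.
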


\begin{proof}
Given a forest $\forest$, there is a unique maximal elementary forest with $\forest$ as an expansion, namely the elementary forest whose $k$th tree is non-trivial (hence a caret) if and only if the $k$th tree of $\forest$ is non-trivial, for each $k$. Call this the \emph{elementary core} of $\forest$, denoted $\core(\forest)$. Note that if $\forest$ is non-trivial then so is $\core(\forest)$. If $\altforest=\core(\forest)$, call $[\altforest,1]$ the \emph{elementary core} of $[\forest,1]$, and write $\core([\forest,1])\defeq [\altforest,1]$. Now let $x\le z$ with $x\not\preceq z$, and consider $(x,z)\defeq \{y\mid x<y<z\}$. Since any $y\in (x,z)$ is of the form $xs$ for $s$ a non-trivial split, we can define a map $\phi\colon (x,z)\to (x,z)$ via $\phi(xs)\defeq x\core(s)$. This is clearly a poset map that restricts to the identity on its image, and satisfies $\phi(y)\le y$ for all $y$. Finally, note that $\phi(y)\le\phi(z)\in (x,z)$ for all $y$. Standard poset theory (see, e.g., \cite[Section~1.5]{quillen78}) now tells us that $|(x,z)|$ is contractible (intuitively, $\phi$ ``retracts'' it to a cone on the point $\phi(z)$).

Now our goal is to build up from $X$ to $|\groupoid_1|$ by gluing in the missing simplices, in such a way that whenever we add a new simplex it is along a contractible relative link, which will imply that $X\simeq |\groupoid_1|$. The missing simplices are precisely the non-elementary ones. Let us actually glue in all the non-elementary simplices in chunks, by gluing in (contractible) subcomplexes of the form $|\{y\mid x\le y\le z\}|$ for $x<z$ non-elementary. We glue these in, in order of increasing $f(z)-f(z)$ value, where $f\colon \groupoid_1\to \N$ sends $[\tree,\forest]$ to the number of roots of $\forest$. (Think of $f(z)-f(x)$ as the number of carets in the split taking $x$ to $z$.) When we glue in $|\{y\mid x\le y\le z\}|$, the relative link is $|\{y\mid x\le y<z\}|\cup|\{y\mid x<y\le z\}|$. This is the suspension of $|\{y\mid x<y<z\}|$, which is contractible by the first paragraph of the proof.
\end{proof}

Note that the action of $F$ on $|\groupoid_1|$ restricts to an action of $F$ on $X$.

\medskip

\subsection*{F: The Stein--Farley cube complex}

The Stein complex $X$ is easier to use than $|\groupoid_1|$, but there is one further simplification that makes it still easier, namely, the simplices of $X$ can be glommed together into cubes, making $X$ a cube complex. This was observed by Stein in \cite{stein92} and further developed in \cite{farley03}, where $X$ was shown to even be a $\CAT(0)$ cube complex.

Given $x\preceq z$, say $z=xs$ for $s=[\altforest,1]$ an elementary split, the set $\{y\mid x\le y\le z\}$ is a boolean lattice. This is because the forests $\altforest'$ with $x[\altforest',1]\le x[\altforest,1]$ are all obtained by assigning a $0$ or a $1$ to each caret in $\altforest$ and including said caret in $\altforest'$ if and only if it was assigned a $1$. The geometric realizations of these boolean lattices, which are metric cubes, cover $X$, and any non-empty intersection of such cubes is itself such a cube, so in this way $X$ has the structure of a cube complex. When we view $X$ as a cube complex instead of a simplicial complex, we will call it the \emph{Stein-Farley complex}.

The action of $F$ on $X$ takes cubes to cubes, so $F$ acts cellularly on the Stein-Farley complex.

\medskip

\subsection*{G: Sublevel complexes}

At this point we have a free cellular action of $F$ on the contractible cube complex $X$. If the orbit space were of finite type, meaning if it had finitely many cells per dimension, then we would be done proving $F$ is of type $\F_\infty$. In fact this is not the case, but $X$ does admit a natural filtration into cocompact subcomplexes that are increasingly highly connected, as we now explain.

Let $f\colon \groupoid_1\to \N$ be the function from the proof of Proposition~\ref{prop:stein_cible}, so $f([\tree,\forest])$ equals the number of roots of $\forest$. Note that $f$ is $F$-invariant. For each $m\in\N$ let $X^{f\le m}$ be the full subcomplex of $X$ spanned by vertices $x\in X^{(0)}=\groupoid_1$ with $f(x)\le m$. The $X^{f\le m}$ are called \emph{sublevel complexes}. Note that the $X^{f\le m}$ are nested, and their union is all of $X$, so they form a filtration of $X$. Each $X^{f\le m}$ is $F$-invariant.

\begin{lemma}\label{lem:cocpt}
Each $X^{f\le m}$ is cocompact under the action of $F$.
\end{lemma}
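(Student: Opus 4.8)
The plan is to show that $F$ acts on $X^{f\le m}$ with only finitely many orbits of cubes; since an action with finitely many orbits of cells has compact quotient, this is exactly cocompactness. I would organize the argument around the fact that every cube of the Stein--Farley complex has a canonical minimum vertex: by the construction in Section~F, each cube is the geometric realization $|\{y\mid x\le y\le xs\}|$ of a boolean lattice determined by an elementary split $s=[\altforest,1]$, and $x$ is the $\le$-minimum of that lattice; since the $F$-action on $\groupoid_1$ is order preserving, it carries minimum vertices to minimum vertices. Thus it suffices to check two things: \emph{(a)} the vertex set of $X^{f\le m}$ consists of finitely many $F$-orbits; and \emph{(b)} for each vertex $x$, only finitely many cubes of $X$ have $x$ as their minimum vertex.

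For \emph{(a)} I would prove the sharper statement that $u,v\in\groupoid_1$ lie in the same $F$-orbit if and only if $f(u)=f(v)$; one implication is immediate from the $F$-invariance of $f$. For the converse, fix for each $n\ge 1$ a tree $\tree_n$ with $n$ leaves. Given a vertex $[\tree,\forest]$ whose forest $\forest$ has $n$ roots, note that $\forest$ is an expansion of $1_n$, and grafting the trees of $\forest$ onto the leaves of $\tree_n$ produces a tree $\alttree$, with the same number of leaves as $\tree$, such that $[\alttree,\forest]=[\tree_n,1_n]$. Then $g\defeq[\alttree,\tree]$ is a tree pair, hence lies in $F$, and $g[\tree,\forest]=[\alttree,\forest]=[\tree_n,1_n]$. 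Hence every vertex of $X^{f\le m}$ lies in the orbit of one of $[\tree_1,1_1],\dots,[\tree_m,1_m]$, giving at most $m$ vertex orbits.

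For \emph{(b)}, recall again from Section~F that a cube with minimum vertex $x$ has the form $|\{y\mid x\le y\le xs\}|$ for an elementary split $s=[\altforest,1]$, and the product $xs$ is defined only when $\altforest$ has exactly $f(x)$ roots; but an elementary forest on $f(x)$ roots is specified by choosing which of its roots carries a caret, so there are at most $2^{f(x)}$ of them, and hence at most $2^{f(x)}$ cubes of $X$ with minimum vertex $x$. Combining \emph{(a)}, \emph{(b)}, and the $F$-invariance of $X^{f\le m}$: any cube of $X^{f\le m}$ can be translated by an element of $F$ so that its minimum vertex is one of the finitely many representatives from \emph{(a)}, whereupon it becomes one of the finitely many cubes counted in \emph{(b)}; so $F$ acts on $X^{f\le m}$ with finitely many orbits of cubes, as required. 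I expect the only point needing real care to be the groupoid bookkeeping in \emph{(a)} --- verifying that the expansions match up so that $g$ genuinely is a tree pair and genuinely sends $[\tree,\forest]$ to the chosen normal form --- with everything else being routine finite counting.
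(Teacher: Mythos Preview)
Your proof is correct and follows essentially the same two-step strategy as the paper: show that vertices with equal $f$-value lie in a single $F$-orbit, then use that only finitely many cubes meet (or in the paper's phrasing, that $X$ is locally finite near) each vertex. The paper handles your step~\emph{(a)} more quickly by observing that if $f(x)=f(x')$ then the groupoid product $x'x^{-1}$ is defined and lies in $F$, which avoids the explicit grafting construction; but your version is fine.
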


\begin{proof}
Since there are only finitely many elementary forests with a given number of roots or a given number of leaves, $X$ is locally finite. Hence it suffices to show $X^{f\le m}$ has finitely many $F$-orbits of vertices, and for this we claim that if $x,x'\in X^{(0)}$ with $f(x)=f(x')$ then $F.x=F.x'$. Indeed, $f(x)=f(x')$ ensures that $x'x^{-1}$ is an allowable product in $\groupoid$, and clearly $x'x^{-1}\in F$ with $(x'x^{-1})x=x'$.
\end{proof}

To summarize, for each $m\in\N$, $F$ acts freely, cocompactly, and cellularly on $X^{f\le m}$. To show that $F$ is of type $\F_\infty$, i.e., of type $\F_n$ for all $n$, it just remains to show that for each $n$ there exists $m$ such that $X^{f\le m}$ is $(n-1)$-connected.

Let $\displaystyle \nu(m)\defeq \left\lfloor\frac{m-2}{3}\right\rfloor$.

\begin{proposition}\label{prop:conn}
The complex $X^{f\le m}$ is $(\nu(m+1)-1)$-connected.
\end{proposition}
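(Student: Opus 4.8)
The plan is to prove this by the standard discrete Morse theory / Bestvina–Brady style argument, using $f$ (number of roots of the second forest) as a Morse function on the Stein–Farley cube complex $X$ and analyzing the descending links. Since $X$ is contractible (Proposition~\ref{prop:stein_cible}) and the sublevel complexes exhaust $X$, the complex $X^{f\le m}$ is obtained from $X^{f\le m-1}$ by coning off the descending links of the vertices $x$ with $f(x)=m$. Thus, by a Morse-theoretic induction on $m$, it suffices to show that the descending link $\dlk(x)$ of every vertex $x$ with $f(x)=m$ is $(\nu(m)-2)$-connected (roughly): once the descending links are $k$-connected, passing up a level does not change $\pi_j$ for $j\le k$, and one bootstraps from the contractibility of $X$ down to the desired connectivity of $X^{f\le m}$. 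I would first set up this Morse-theory bookkeeping carefully, getting the exact numerology to match $\nu(m+1)-1$.

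**The heart of the matter** is computing the descending link of a vertex $x=[\tree,1]$ (one reduces to this form since $f$ is constant on the $\preceq$-structure in a controlled way) with $f(x)=m$, i.e., $\tree$ has $m$ leaves. A cube of $X$ containing $x$ at its top corresponds to an elementary split $s=[\altforest,1]$ with $x = y s$ for the bottom vertex $y$, i.e., to choosing a set of leaves of some forest to ``unmerge''; the descending link is the poset of such elementary splits that genuinely decrease $f$, which biject with nonempty subsets of the ``feet'' of $\tree$ that are leaves of carets — more precisely, with the ways of choosing which of the (at most $m-1$) carets sitting just above the leaves of $\tree$ to retract. This descending link turns out to be (the barycentric subdivision of) a complex built from a matching-type structure on roughly $m$ objects, and one shows it is highly connected by an explicit argument: it deformation retracts onto, or is covered by, pieces that are joins/suspensions, and one tracks how many ``independent merges'' are available. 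The connectivity grows linearly in $m$, with the factor $1/3$ in $\nu$ coming from the combinatorial cost of each elementary reduction step.

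**The main obstacle**, and the step I expect to require the most care, is the connectivity estimate for the descending link itself — getting the sharp bound that $\dlk(x)$ is $(\nu(m)-1)$-connected (or whatever exact index makes the induction close to $\nu(m+1)-1$). The cleanest route is probably to identify $\dlk(x)$ with a complex of the form studied in the Thompson-group literature — a ``matching complex'' or the complex of nonempty proper subsets of caret-slots with the boolean-lattice cube structure removed — and then prove connectivity by a nerve-type argument or by induction using the decomposition of the descending link as a union over the first available caret, applying the gluing lemma (Mayer–Vietoris plus van Kampen) to control $\pi_j$. One must be vigilant about small cases ($m$ small, where $\nu(m+1)-1$ is negative and the statement is vacuous) and about the fact that $\preceq$ is not transitive, so the descending ``link'' must be defined via the cube structure of the Stein–Farley complex rather than the simplicial structure of $X$. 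Once the descending-link connectivity is in hand, feeding it into the Morse-theory machinery and checking the arithmetic $\lfloor (m-1)/3\rfloor - 1 \le (\text{connectivity bound})$ finishes the proof.
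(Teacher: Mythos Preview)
Your approach is exactly the paper's: extend $f$ affinely to a Morse function on the Stein--Farley cube complex, invoke Bestvina--Brady so that it suffices to show each descending link at level $n>m$ is $(\nu(m+1)-1)$-connected, identify that descending link with the matching complex $\match(L_n)$ of the path graph, and prove the connectivity of $\match(L_n)$ by induction using the decomposition over the last edge together with Mayer--Vietoris and van Kampen.

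One genuine imprecision to fix: the descending link of $x$ does \emph{not} depend on the caret structure of the tree $\tree$ in a representative $x=[\tree,1_n]$; it depends only on $n=f(x)$. The vertices of $\dlk x$ are the elements $x[1,\altforest]$ for $\altforest$ a nontrivial elementary forest with $n$ \emph{leaves}, and the carets of such an $\altforest$ correspond to disjoint adjacent pairs $\{i,i+1\}\subseteq\{1,\dots,n\}$, giving exactly $\match(L_n)$. Your description in terms of ``carets sitting just above the leaves of $\tree$ to retract'' would, for instance, make the descending link of a left-comb tree a single point, which is wrong. Once you replace that picture with the $\match(L_n)$ identification, your proposed inductive connectivity argument is the same as the paper's and goes through.
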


We will prove Proposition~\ref{prop:conn} shortly. First let us see why we will be done after this.

\begin{theorem}
$F$ is of type $\F_\infty$.
\end{theorem}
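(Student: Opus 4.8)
The plan is to assemble the pieces already established in Sections D through G into the standard topological finiteness argument. Recall the criterion quoted in the Introduction: a group $G$ is of type $\F_n$ if and only if it admits a free, cocompact, cellular action on an $(n-1)$-connected CW complex. So it suffices, for each fixed $n\in\N$, to exhibit one complex with a free cocompact cellular $F$-action that is $(n-1)$-connected.

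First I would recall that, by the work of Sections C through F, $F$ acts freely (Lemma~\ref{lem:free}) and cellularly on the Stein--Farley cube complex $X$, and this action preserves each sublevel complex $X^{f\le m}$ (since $f$ is $F$-invariant). By Lemma~\ref{lem:cocpt}, each $X^{f\le m}$ is cocompact under this action, and the action on it remains free and cellular as it is the restriction of a free cellular action to an invariant subcomplex. Thus for every $m\in\N$, the complex $X^{f\le m}$ carries a free, cocompact, cellular $F$-action, exactly the kind of complex the $\F_n$ criterion wants.

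Next, fix $n\in\N$. I would invoke Proposition~\ref{prop:conn}, which says $X^{f\le m}$ is $(\nu(m+1)-1)$-connected, where $\nu(m)=\lfloor(m-2)/3\rfloor$. Since $\nu(m+1)=\lfloor(m-1)/3\rfloor\to\infty$ as $m\to\infty$, we may choose $m=m(n)$ large enough that $\nu(m+1)-1\ge n-1$, i.e., $\nu(m+1)\ge n$; concretely $m=3n+1$ works, since then $\nu(m+1)=\nu(3n+2)=\lfloor(3n)/3\rfloor=n$. For this $m$, the complex $X^{f\le m}$ is $(n-1)$-connected and, by the previous paragraph, admits a free, cocompact, cellular $F$-action, so $F$ is of type $\F_n$.

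Since $n\in\N$ was arbitrary, $F$ is of type $\F_n$ for all $n$, which is precisely the definition of type $\F_\infty$. I do not anticipate any genuine obstacle here: the entire substantive content has been front-loaded into Lemma~\ref{lem:free}, Lemma~\ref{lem:cocpt}, and Proposition~\ref{prop:conn}, so this final argument is just the bookkeeping step of choosing $m$ as a function of $n$ and citing the $\F_n$ criterion. The only mild subtlety worth stating explicitly is that the criterion requires the \emph{same} complex to be simultaneously free, cocompact, and highly connected, which is why it matters that all three properties hold for $X^{f\le m}$ for a single well-chosen $m$, rather than being spread across different complexes.
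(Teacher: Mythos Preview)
Your proposal is correct and follows essentially the same approach as the paper: assemble the free cellular action (Lemma~\ref{lem:free}), cocompactness (Lemma~\ref{lem:cocpt}), and the connectivity bound (Proposition~\ref{prop:conn}) on $X^{f\le m}$, then let $\nu(m+1)\to\infty$ to get $\F_n$ for all $n$. The paper's version is terser and does not bother to name a specific $m=m(n)$, but the logic is identical.
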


\begin{proof}
For each $m\in\N$, $F$ acts freely, cocompactly, and cellularly on the $(\nu(m+1)-1)$-connected complex $X^{f\le m}$. Hence $F$ is of type $\F_{\nu(m+1)}$ for all $m$. Since $\nu(m+1)$ goes to $\infty$ as $m$ goes to $\infty$, $F$ is of type $\F_\infty$.
\end{proof}

\medskip

\subsection*{H: Descending links}

To prove Proposition~\ref{prop:conn} we will use Bestvina--Brady Morse theory (see \cite{bestvina97}). Given an affine cell complex $Y$, e.g., a simplicial or cube complex, a map $h\colon Y\to \R$ is a \emph{Morse function} if $h$ is affine on cells, non-constant on edges, and discrete on vertices. Given a Morse function $h\colon Y\to\R$ and a cell $C$ in $Y$, $h$ achieves its maximum value on $C$ at a unique vertex, called the \emph{top} of $C$. The \emph{descending link} $\dlk y$ of a vertex $y\in Y^{(0)}$ is the link of $y$ in all the cells with $y$ as their top. The point of Morse theory is that a sufficient understanding of descending links can translate into knowledge about sublevel complexes (see \cite[Corollary~2.6]{bestvina97}).

\begin{proof}[Proof of Proposition~\ref{prop:conn}]
We can extend $f\colon X^{(0)}\to \N$ to a map $f\colon X\to \R$ by extending affinely to each cube, and this is a Morse function. Since $X$ is contractible, it now suffices by \cite[Corollary~2.6]{bestvina97} to prove that for every $x\in X^{(0)}$ with $f(x)>m$, the descending link $\dlk x$ is $(\nu(m+1)-1)$-connected. The descending link of $x$ is the simplicial complex with a $k$-simplex for each $x'=x[1,\altforest]$, where $\altforest$ is an elementary forest with $k+1$ carets and $f(x)$ leaves, with face relation given by removing carets. (For this, it is important that we are using the cubical structure on $X$, not the simplicial structure.) If $f(x)=n$ then this is isomorphic to the matching complex on the graph $L_n$. Here $L_n$ is the graph with vertex set $\{1,\dots,n\}$ and an edge $\{i,i+1\}$ for each $1\le i\le n-1$, and the \emph{matching complex} $\match(\Gamma)$ of a graph $\Gamma$ is the simplicial complex with a simplex for each non-empty finite collection of pairwise disjoint edges of $\Gamma$ with face relation given by inclusion (see Figure~\ref{fig:match}).

\medskip

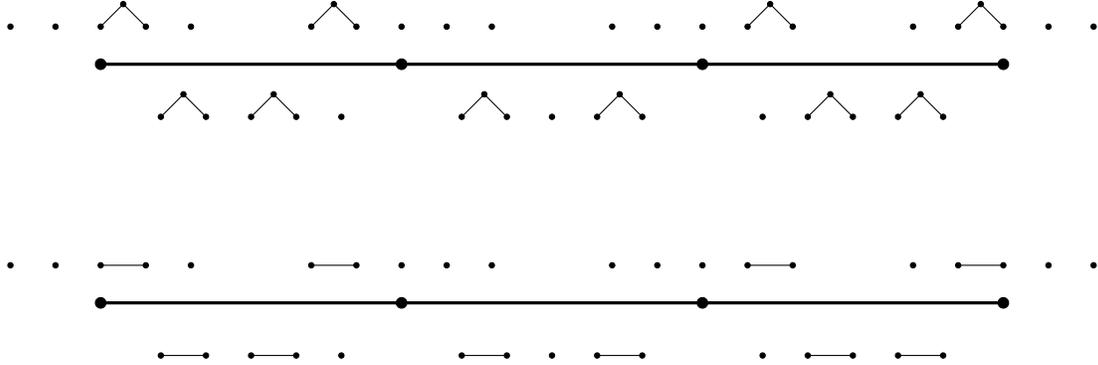
\begin{figure}[hbt]
\begin{tikzpicture}
\draw[line width=1.2] (0,0) -- (12,0);
\filldraw (0,0) circle (2pt)   (4,0) circle (2pt)   (8,0) circle (2pt)   (12,0) circle (2pt);

\draw (0,.5) -- (.3,.8) -- (.6,.5);
\filldraw (-1.2,.5) circle (1pt)   (-.6,.5) circle (1pt)   (0,.5) circle (1pt)   (.6,.5) circle (1pt)   (1.2,.5) circle (1pt)   (.3,.8) circle (1pt);

\draw (4-1.2,.5) -- (4-.9,.8) -- (4-.6,.5);
\filldraw (4-1.2,.5) circle (1pt)   (4-.6,.5) circle (1pt)   (4,.5) circle (1pt)   (4.6,.5) circle (1pt)   (5.2,.5) circle (1pt)   (4-.9,.8) circle (1pt);

\draw (8+.6,.5) -- (8+.9,.8) -- (8+1.2,.5);
\filldraw (8-1.2,.5) circle (1pt)   (8-.6,.5) circle (1pt)   (8,.5) circle (1pt)   (8.6,.5) circle (1pt)   (9.2,.5) circle (1pt)   (8+.9,.8) circle (1pt);

\draw (12-.6,.5) -- (12-.3,.8) -- (12,.5);
\filldraw (12-1.2,.5) circle (1pt)   (12-.6,.5) circle (1pt)   (12,.5) circle (1pt)   (12.6,.5) circle (1pt)   (13.2,.5) circle (1pt)   (12-.3,.8) circle (1pt);

\draw (2,-.7) -- (2.3,-.4) -- (2.6,-.7)   (2-1.2,-.7) -- (2-.9,-.4) -- (2-.6,-.7);
\filldraw (2-1.2,-.7) circle (1pt)   (2-.6,-.7) circle (1pt)   (2,-.7) circle (1pt)   (2.6,-.7) circle (1pt)   (3.2,-.7) circle (1pt)   (2.3,-.4) circle (1pt)   (2-.9,-.4) circle (1pt);

\draw (6.6,-.7) -- (6.9,-.4) -- (6+1.2,-.7)   (6-1.2,-.7) -- (6-.9,-.4) -- (6-.6,-.7);
\filldraw (6-1.2,-.7) circle (1pt)   (6-.6,-.7) circle (1pt)   (6,-.7) circle (1pt)   (6.6,-.7) circle (1pt)   (7.2,-.7) circle (1pt)   (6.9,-.4) circle (1pt)   (6-.9,-.4) circle (1pt);

\draw (10.6,-.7) -- (10.9,-.4) -- (10+1.2,-.7)   (10-.6,-.7) -- (10-.3,-.4) -- (10,-.7);
\filldraw (10-1.2,-.7) circle (1pt)   (10-.6,-.7) circle (1pt)   (10,-.7) circle (1pt)   (10.6,-.7) circle (1pt)   (11.2,-.7) circle (1pt)   (10.9,-.4) circle (1pt)   (10-.3,-.4) circle (1pt);

\begin{scope}[yshift=-1.25in]
\draw[line width=1.2] (0,0) -- (12,0);
\filldraw (0,0) circle (2pt)   (4,0) circle (2pt)   (8,0) circle (2pt)   (12,0) circle (2pt);

\draw (0,.5) -- (.6,.5);
\filldraw (-1.2,.5) circle (1pt)   (-.6,.5) circle (1pt)   (0,.5) circle (1pt)   (.6,.5) circle (1pt)   (1.2,.5) circle (1pt);

\draw (4-1.2,.5) -- (4-.6,.5);
\filldraw (4-1.2,.5) circle (1pt)   (4-.6,.5) circle (1pt)   (4,.5) circle (1pt)   (4.6,.5) circle (1pt)   (5.2,.5) circle (1pt);

\draw (8+.6,.5) -- (8+1.2,.5);
\filldraw (8-1.2,.5) circle (1pt)   (8-.6,.5) circle (1pt)   (8,.5) circle (1pt)   (8.6,.5) circle (1pt)   (9.2,.5) circle (1pt);

\draw (12-.6,.5) -- (12,.5);
\filldraw (12-1.2,.5) circle (1pt)   (12-.6,.5) circle (1pt)   (12,.5) circle (1pt)   (12.6,.5) circle (1pt)   (13.2,.5) circle (1pt);

\draw (2,-.7) -- (2.6,-.7)   (2-1.2,-.7) -- (2-.6,-.7);
\filldraw (2-1.2,-.7) circle (1pt)   (2-.6,-.7) circle (1pt)   (2,-.7) circle (1pt)   (2.6,-.7) circle (1pt)   (3.2,-.7) circle (1pt);

\draw (6.6,-.7) -- (6+1.2,-.7)   (6-1.2,-.7) -- (6-.6,-.7);
\filldraw (6-1.2,-.7) circle (1pt)   (6-.6,-.7) circle (1pt)   (6,-.7) circle (1pt)   (6.6,-.7) circle (1pt)   (7.2,-.7) circle (1pt);

\draw (10.6,-.7) -- (10+1.2,-.7)   (10-.6,-.7) -- (10,-.7);
\filldraw (10-1.2,-.7) circle (1pt)   (10-.6,-.7) circle (1pt)   (10,-.7) circle (1pt)   (10.6,-.7) circle (1pt)   (11.2,-.7) circle (1pt);

\end{scope}
\end{tikzpicture}
\caption{The correspondence between $\dlk x$ with $f(x)=5$ (the top picture, with the forests $\altforest$ representing the simplices) and $\match(L_5)$ (the bottom picture).}
\label{fig:match}
\end{figure}

\medskip

Since $n>m$, it now suffices to show that $\match(L_n)$ is $(\nu(n)-1)$-connected. This is well known (see, e.g., \cite[Proposition~11.16]{kozlov08}), but it is easy to prove so we present a proof here. We will induct on $n$ to prove that this holds, and that moreover $\match(L_n)$ is contractible whenever $n\equiv 2 \mod 3$, and that the inclusion $\match(L_{n-1})\to \match(L_n)$ is a homotopy equivalence whenever $n\equiv 1 \mod 3$. As a base case we can check ``by hand'' that $\match(L_n)$ is non-empty (i.e., $(-1)$-connected) for $n\ge 2$, $\match(L_2)$ is contractible, and $\match(L_3)\to \match(L_4)$ is a homotopy equivalence. Now assume $n\ge 5$. Clearly $\match(L_n)$ is isomorphic to $\match(L_{n-1})$ union the star of $\{n-1,n\}$, and the intersection of $\match(L_{n-1})$ with this star is $\match(L_{n-2})$. Hence $\match(L_n)$ is homotopy equivalent to the mapping cone of the inclusion $\match(L_{n-2})\to \match(L_{n-1})$. If $n\equiv 0,1 \mod 3$ then $\nu(n-1)=\nu(n)$, so $\match(L_{n-1})$ is $(\nu(n)-1)$-connected, and moreover $\match(L_{n-2})$ is $(\nu(n)-2)$-connected, so $\match(L_n)$ is $(\nu(n)-1)$-connected (this follows for example from Van Kampen, Mayer--Vietoris, and Hurewicz). If $n\equiv 2 \mod 3$ then the inclusion $\match(L_{n-2})\to \match(L_{n-1})$ is a homotopy equivalence, so $\match(L_n)$ is contractible. Lastly, if $n\equiv 1 \mod 3$ then $\match(L_{n-2})$ is contractible, so the inclusion $\match(L_{n-1})\to \match(L_n)$ is a homotopy equivalence.
\end{proof}

\bibliographystyle{alpha}
\newcommand{\etalchar}[1]{$^{#1}$}

\end{document}